% ------------------------------------------------------------------------
% ***************************** S.A.Seyed Fakhari *****************************
% ------------------------------------------------------------------------
% ******* This is a journal template file for use with AMS-LaTeX. ********
% ------------------------------------------------------------------------

\documentclass[12pt]{amsart}

\usepackage{amsmath}
\usepackage{amscd}
\usepackage{amssymb}
\usepackage{amsfonts}
\usepackage{amsthm}
\usepackage{enumerate}
\usepackage{hyperref}
\usepackage{color}
\usepackage{graphicx}
%\usepackage{psfrag}
%\usepackage[all]{xy}

% Setup Environments -----------------------------------------------------
\textheight=600pt
\textwidth=435pt
\oddsidemargin=17pt
\evensidemargin=17pt

% Theorem Environments ---------------------------------------------------
\theoremstyle{plain}
\newtheorem{thm}{Theorem}[section]

\newtheorem{prop}[thm]{Proposition}
\newtheorem{lem}[thm]{Lemma}

\newtheorem{ques}[thm]{Question}

\theoremstyle{definition}

\newtheorem{dfns-rems}[thm]{Definitions and Remarks}
\newtheorem{notas-rems}[thm]{Notations and Remarks}
\newtheorem{exmps-rems}[thm]{Examples and Remarks}

%%%%%%%%%%%%%%%%%%%%%%%%%%%%%%%%%%%%%%%%%%%%%%%%%%%%%%%%%%%%%%%%%%%%%%%%%%

\begin{document}

% ------------------------------------------------------------------------

\title[$h$-vector of simplicial complexes]{On the $h$-vector of ($S_r$) simplicial complexes}

% ------------------------------------------------------------------------

\author[S. A. Seyed Fakhari]{S. A. Seyed Fakhari}

\address{S. A. Seyed Fakhari, School of Mathematics, Statistics and Computer Science,
College of Science, University of Tehran, Tehran, Iran.}

\email{aminfakhari@ut.ac.ir}

\urladdr{http://math.ipm.ac.ir/$\sim$fakhari/}

% ------------------------------------------------------------------------

\begin{abstract}
We give a negative answer to a question proposed in \cite{gpsy}, regarding the $h$-vector of ($S_r$) simplicial complexes.
\end{abstract}

% ------------------------------------------------------------------------

\subjclass[2000]{13F55, 05E45}

% ------------------------------------------------------------------------

\keywords{Simplicial complex, $h$-Vector, Serre's condition}

% ------------------------------------------------------------------------

\maketitle

%%%%%%%%%%%%%%%%%%%%%%%%%%%%%%%%%%%%%%%%%%%%%%%%%%%%%%%%%%%%%%%%%%%%%%%%%

\section{Introduction and preliminaries} \label{sec1}

The study of $h$-vectors of simplicial complexes is an important topic in
combinatorial commutative algebra, because it determines the coefficients
of the numerator of the Hilbert series of a Stanley--Reisner ring associated
to a simplicial complex. We refer the reader to Stanley's book \cite{s'} and the book of Herzog and Hibi \cite{hh} for an introduction to Simplicial complexes and Stanley--Reisner rings.

Let $\mathbb{K}$ be a field and $S=\mathbb{K}[x_1,\dots,x_n]$ be the
polynomial ring in $n$ variables over $\mathbb{K}$. A finitely generated $S$-module $M$ is said to satisfy the {\it Serre's condition} ($S_r$), if $${\rm depth}\ M_{\frak{p}}\geq \min\{r,\dim
M_{\frak{p}}\},$$ for every $\frak{p}\in
\rm{Spec}(S)$. We say that a
simplicial complex $\Delta$ is an ($S_r$) {\it simplicial complex}, if its
Stanley--Reisner ring satisfies the Serre's condition ($S_r$). It is easy to see that every
simplicial complex is ($S_1$). Therefore, we assume that
$r\geq2$. We refer the reader to \cite{psty} for a survey about ($S_r$) simplicial complexes.

The classical result of Stanley
characterizes the $h$-vector of Cohen--Macaulay simplicial complexes (see \cite[Theorem 3.3, Page 59]{s'}). Murai and Terai \cite{mt} studied the $h$-vector of ($S_r$) simplicial complexes. They proved that if $\Delta$ is a $(d-1)$-dimensional
($S_r$) simplicial complex with $h(\Delta)=(h_0, \ldots, h_d)$, then $(h_0,h_1,\ldots,h_r)$ is an $M$-vector (i.e., it is the $h$-vector of a Cohen--Macaulay simplicial complex) and $h_r+h_{r+1}+
\cdots+h_d$ is nonnegative. In \cite{gpsy}, the authors extended the result of Murai and Terai
by giving $r$ extra necessary conditions. Indeed, they proved that$${i\choose i}h_r+{i+1\choose i}h_{r+1}+\cdots+{i+d-r\choose
    i}h_d\geq 0,$$ for every integer $i$ with $1\leq i\leq r$. Notice that for $i=0$, the above inequality reduces to the inequality
$h_r+ \ldots+ h_d\geq 0$, which was obtained by Murai and Terai. In \cite{gpsy}, the authors asked whether the above mentioned conditions are also sufficient for a sequence of integers to be the $h$-vector of a ($S_r$) simplicial complex. In fact, they proposed the following question.

\begin{ques}[\cite{gpsy}, Question 2.6] \label{quest}
Let $d$ and $r$ be integers with $d\geq r\geq2$ and let $\mathbf{h}=(h_0,
h_1,\ldots,h_d)$ be the $h$-vector of a simplicial complex in such a way
that the following conditions hold:

\vspace{0.2cm}

\begin{itemize}
\item[(1)] $(h_0,h_1,\ldots,h_r)$ is an $M$-vector, and \vspace{0.3cm}

\item[(2)] ${i\choose i}h_r+{i+1\choose i}h_{r+1}+\cdots+{i+d-r\choose
    i}h_d$ is nonnegative for every $i$ with $0\leq i\leq r$.
\end{itemize}

\vspace{0.2cm}

\noindent Does there exist a $(d-1)$-dimensional ($S_r$) simplicial complex
$\Delta$ with $h(\Delta)=\mathbf{h}$?
\end{ques}

In this paper, we give a negative answer to this question, by presenting a class of infinitely many sequences which satisfy the assumptions of Question \ref{quest} for $r=2$, while they are not the $h$-vector of any ($S_2$) simplicial complex. It is still interesting to know whether Question \ref{quest} is true in the case $r\geq 3$.

Another result obtained by Murai and Terai \cite[Theorem 1.2]{mt} states that if $\mathbf{h}=(h_0,h_1,\ldots,h_d)$ is the $h$-vector of a ($S_r$) simplicial complex and $h_i=0$ for some
$i\leq r$, then $h_k=0$ for all $k\geq i$. This is in fact a necessary condition for a sequence of integers to be the $h$-vector of a ($S_r$) simplicial complex. Our example shows that if we add this necessary condition to the assumptions of Question \ref{quest}, then the answer would be still negative.

Let $\mathbf{h}=(h_0, h_1,\ldots,h_d)$ be a sequence of integers. One may ask, whether there exists a ($S_2$) simplicial complex
$\Delta$ with $h(\Delta)=\mathbf{h}$, provided that $\mathbf{h}$ satisfies the conditions (1) and (2) of Question \ref{quest} and moreover, $\mathbf{h}$ is the $h$-vector of a "pure" simplicial complex. In fact, Our example shows that the answer of this question is also negative (see Lemma \ref{pure}). More explicit, we prove the following result.

\begin{thm} \label{main}
For every integer $d\geq 5$, there exists a vector $\mathbf{h}=(h_0,
h_1,\ldots,h_d)$ of nonzero integers which is the $h$-vector of a pure simplicial complex and moreover,

\vspace{0.2cm}

\begin{itemize}
\item[(1)] $(h_0,h_1,h_2)$ is an $M$-vector, and \vspace{0.3cm}

\item[(2)] ${i\choose i}h_2+{i+1\choose i}h_3+\cdots+{i+d-2\choose
    i}h_d$ is nonnegative for every $i$ with $0\leq i\leq 2$.
\end{itemize}

\vspace{0.2cm}

\noindent But there is no $(d-1)$-dimensional ($S_2$) simplicial complex
$\Delta$ with $h(\Delta)=\mathbf{h}$.
\end{thm}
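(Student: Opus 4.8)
The plan is to prove the theorem by an explicit construction, since the statement is purely existential: for each $d\ge 5$ it suffices to exhibit one vector $\mathbf{h}=(h_0,\dots,h_d)$ of nonzero integers and to check four things separately — that $(h_0,h_1,h_2)$ is an $M$-vector, that the three inequalities in (2) hold, that $\mathbf{h}$ is the $h$-vector of a pure complex, and that no $(d-1)$-dimensional ($S_2$) complex realizes it. I would normalize $h_0=1$, take $(h_0,h_1,h_2)$ to be a small $M$-vector so that condition (1) is immediate, and then choose the tail $h_3,\dots,h_d$ with mixed signs, with the bulk of the positive mass pushed into the last coordinates. This is the right shape because the inequalities in (2) weight $h_j$ by $\binom{i+j-2}{i}$, which grows in $j$, so a large positive $h_d$ (coefficient $\binom{i+d-2}{i}$) buys slack in all three inequalities at once and compensates for a negative entry earlier in the tail while keeping every coordinate nonzero.

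The verification of (1) and (2) is then routine: (1) amounts to the single Macaulay bound $h_2\le\binom{h_1+1}{2}$, and (2) to three explicit linear inequalities in the chosen integers, which I would record in one short computation. The realizability statement is the part I would isolate as Lemma \ref{pure}. Here I would compute the $f$-vector from $f_{i-1}=\sum_{k=0}^{i}\binom{d-k}{i-k}h_k$, verify $f_{i-1}\ge 0$ for every $i$, and then actually produce a pure complex with this $f$-vector — for instance as a union of a Cohen--Macaulay complex with additional top-dimensional faces, or via a complex whose Stanley--Reisner ideal is simple enough to read off its $h$-vector. The tension to respect at this stage is that pushing an entry of the tail negative (which is what will ultimately obstruct $(S_2)$) competes with the nonnegativity of the $f_{i-1}$, so the numerical choices must be tuned to keep realizability while still violating the finer invariant below.

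The heart of the proof, and the step I expect to be the main obstacle, is the last assertion: no ($S_2$) complex has $h$-vector $\mathbf{h}$. Conditions (1) and (2) hold by construction, so the contradiction cannot come from them; I must use a property of $(S_2)$ complexes that (1) and (2) fail to encode. The crucial preliminary observation is that the purely \emph{global} consequences of $(S_2)$ give nothing new: $(S_2)$ forces $\operatorname{depth}\mathbb{K}[\Delta]\ge 2$, but one checks that the nonnegativity of the coefficients of $h(t)/(1-t)^{d-2}$ (the Hilbert function of a quotient by two generic linear forms, when these form a regular sequence) is automatic for the $h$-vector of \emph{any} simplicial complex once $h_1,h_2\ge 0$, because those coefficients rewrite as nonnegative combinations $\sum_{i\ge 3}f_{i-1}\binom{N-3}{i-3}$ of the face numbers. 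Hence the genuinely $(S_2)$-specific input must be local: by the Reisner/Schenzel-type characterization (see \cite{mt}), $(S_2)$ forces $\Delta$ to be pure with $\widetilde{H}_0(\operatorname{lk}_\Delta\sigma;\mathbb{K})=0$ for every face $\sigma$ with $\dim\operatorname{lk}_\Delta\sigma\ge 1$, i.e. all links of codimension at least two are connected.

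To finish I would assume a realizing ($S_2$) complex $\Delta$ exists and convert this local connectivity into a numerical contradiction with the explicit $\mathbf{h}$. I would follow the local-cohomology method of Murai and Terai \cite{mt}: since $d\ge 5$, $(S_2)$ forces $H^0_{\mathfrak{m}}(\mathbb{K}[\Delta])=H^1_{\mathfrak{m}}(\mathbb{K}[\Delta])=0$ together with degree restrictions on the higher modules $H^i_{\mathfrak{m}}(\mathbb{K}[\Delta])$, and Hochster's formula converts these into a single inequality among the $h_i$ strictly stronger than those in (2); the construction is then engineered so that exactly this extra inequality fails. A more combinatorial variant, which I would keep in reserve, is to use the forced (small) values of $f_0$ and $f_1$ coming from the chosen $(h_0,h_1,h_2)$ together with the link-connectivity characterization to restrict the realizing complexes enough to rule them out directly. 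Producing that one sharper necessary condition — and checking that the explicit vector violates it while still satisfying (1), (2), purity, and $h_i\ne 0$ — is the crux of the whole argument, and it is what makes the answer to Question \ref{quest} negative even after adding purity and the nonvanishing hypothesis.
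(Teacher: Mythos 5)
Your overall architecture matches the paper's (explicit vector, a realizability lemma, separate verification of (1)--(2), then a contradiction argument against $(S_2)$), but the crux --- the proof that no $(S_2)$ complex realizes $\mathbf{h}$ --- is not actually carried out, and your primary strategy for it is very likely a dead end. You propose to extract from the Murai--Terai local cohomology method ``a single inequality among the $h_i$ strictly stronger than those in (2)'' and to engineer the vector to violate it. But conditions (1) and (2) \emph{are} precisely what that method yields: Murai--Terai and \cite{gpsy} obtained them by converting the $(S_r)$ vanishing and degree restrictions on $H^i_{\mathfrak{m}}(\mathbb{K}[\Delta])$, via Hochster's formula, into $h$-vector inequalities. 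No strictly stronger purely numerical inequality is produced by that machinery (if one were, deriving it would itself be the theorem, and the paper would not need any construction at all). Asserting that Hochster's formula ``converts these into'' such an inequality is hand-waving at exactly the point where the real work lies.

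The paper's actual mechanism is structural, not numerical, and this is the idea missing from your proposal. The vector is chosen as $\mathbf{h}=(1,2,1,\ldots,1,-1)$, and each entry is doing a job: $h_d=-1\neq 0$ forces a hypothetical $(S_2)$ complex $\Delta$ not to be a cone; $(S_2)$ forces $\Delta$ pure (Murai--Terai); $h_1=2$ forces exactly $n=d+2$ vertices and $\sum_i h_i=d$ forces exactly $d$ facets. Purity with $n-d=2$ makes $I_\Delta$ unmixed of height $2$, hence the cover ideal $J(G)$ of a graph $G$ on $d+2$ vertices with $d$ edges and no isolated vertex (no cone). Such a graph cannot be connected, and two edges in distinct components give an induced $4$-cycle in $\overline{G}$, contradicting the Yanagawa/Eisenbud--Green--Hulek--Popescu characterization, which forbids induced $4$-cycles in $\overline{G}$ when $S/J(G)$ is $(S_2)$. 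Note also that your proposed numerical shape (``bulk of the positive mass pushed into the last coordinates,'' large positive $h_d$) is incompatible with this argument and works against you: what matters is not slack in the inequalities (2) but keeping $h_1$ small enough to pin down the codimension to $2$, keeping $\sum h_i$ below $n-1$ to force disconnection of $G$, and keeping $h_d\neq 0$ to exclude cones. Your ``reserve'' combinatorial variant gestures in this direction, but without the translation to cover ideals of graphs and the induced-$C_4$ obstruction it does not close the argument.
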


%%%%%%%%%%%%%%%%%%%%%%%%%%%%%%%%%%%%%%%%%%%%%%%%%%%%%%%%%%%%%%%%%%%%%%%%%%

\section{Proof of Theorem \ref{main}} \label{sec2}

Let $d\geq 5$ be an integer and set $\mathbf{h}=(1,2, \underbrace{1, \ldots, 1,}_{(d-2)\text{ times}} -1)$. In Lemma \ref{pure}, we prove that $\mathbf{h}$ is the $h$-vector of a pure simplicial complex. Before stating this lemma, we remind that for a graded $S$-module $M=\oplus_{i\geq 0}M_i$, the {\it Hilbert series} of $M$ is defined to be$${\rm Hilb}_M(t)=\sum_{i\geq 0}({\rm dim}_{\mathbb{K}}M_i)t^i.$$It is well-known that for a $(d-1)$-dimensional simplicial complex $\Delta$, with $h(\Delta)=(h_0,
h_1,\ldots,h_d)$, we have$${\rm Hilb}_{\mathbb{K}[\Delta]}(t)=\frac{h_0+h_1t+h_2t^2+ \ldots +h_dt^d}{(1-t)^d}.$$

\begin{lem} \label{pure}
Assume that $d\geq 5$ is an integer. Let $\Delta$ be the simplicial complex over $[d+2]$ with facets$$\mathcal{F}(\Delta)=\Big\{[d+2]\setminus\{1,j\}: 2\leq j \leq d\Big\}\bigcup\Big\{[d+2]\setminus\{d+1,d+2\}\Big\}.$$Then $h(\Delta)=(1,2, \underbrace{1, \ldots, 1,}_{(d-2)\text{ times}} -1)$.
\end{lem}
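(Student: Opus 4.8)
The plan is to compute the $h$-polynomial $h_{\Delta}(t):=\sum_{i=0}^{d}h_it^i=(1-t)^d\,\mathrm{Hilb}_{\mathbb{K}[\Delta]}(t)=\sum_{i=0}^{d}f_{i-1}(\Delta)\,t^i(1-t)^{d-i}$ directly from the face numbers $f_{i-1}(\Delta)$. Since a head-on count is awkward, I would introduce, for any subcomplex $\Gamma$ on the vertex set $[d+2]$, the normalized polynomial $H_{\Gamma}(t):=\sum_{i\ge 0}f_{i-1}(\Gamma)\,t^i(1-t)^{d-i}$ (so $H_{\Delta}=h_{\Delta}$), and split $\Delta=\Delta_1\cup\Delta_2$ where $\Delta_1=\langle F_2,\dots,F_d\rangle$ with $F_j=[d+2]\setminus\{1,j\}$, and $\Delta_2=\langle G\rangle$ with $G=\{1,\dots,d\}$. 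Because $f_{i-1}(\Delta)=f_{i-1}(\Delta_1)+f_{i-1}(\Delta_2)-f_{i-1}(\Delta_1\cap\Delta_2)$ for every $i$ (inclusion--exclusion on faces), multiplying by $t^i(1-t)^{d-i}$ and summing gives $H_{\Delta}=H_{\Delta_1}+H_{\Delta_2}-H_{\Delta_1\cap\Delta_2}$ with this common weighting.

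Next I would identify the three pieces. The complex $\Delta_2$ is a full $(d-1)$-simplex, so $H_{\Delta_2}(t)=\sum_{i=0}^{d}\binom{d}{i}t^i(1-t)^{d-i}=1$. The complex $\Delta_1$ lives on the $(d+1)$-set $V=\{2,\dots,d+2\}$, and a subset $A\subseteq V$ is a face of $\Delta_1$ precisely when $\{2,\dots,d\}\not\subseteq A$; hence $\Delta_1$ is the boundary of the simplex on $V$ with the two facets $V\setminus\{d+1\}$ and $V\setminus\{d+2\}$ removed, together with their common ridge $\{2,\dots,d\}$. Finally, since every face of $\Delta_1$ avoids vertex $1$ while every face of $\Delta_2$ lies in $\{1,\dots,d\}$, the intersection $\Delta_1\cap\Delta_2$ consists of the subsets of $\{2,\dots,d\}$ lying in $\Delta_1$, that is, all proper subsets of $\{2,\dots,d\}$; so $\Delta_1\cap\Delta_2$ is the boundary of the simplex on the $(d-1)$-set $\{2,\dots,d\}$. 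I would also observe at the outset that the $d$ listed facets all have cardinality $d$ and are pairwise incomparable, so $\Delta$ is pure of dimension $d-1$.

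The computation then rests on the telescoping identity $\sum_{i=0}^{m}\binom{m}{i}t^i(1-t)^{m-i}=1$ and its truncations. For the full simplex boundary on $V$ one gets $\sum_{i=0}^{d}\binom{d+1}{i}t^i(1-t)^{d-i}=1+t+\cdots+t^d$; subtracting the contribution $t^{d-1}(1-t)$ of the deleted ridge and $2t^d$ of the two deleted facets yields $H_{\Delta_1}(t)=1+t+\cdots+t^{d-2}$. Likewise, for the boundary on $\{2,\dots,d\}$ the truncated identity gives $H_{\Delta_1\cap\Delta_2}(t)=(1-t)(1-t^{d-1})$. Combining, $h_{\Delta}(t)=H_{\Delta_1}+H_{\Delta_2}-H_{\Delta_1\cap\Delta_2}=\bigl(1+t+\cdots+t^{d-2}\bigr)+1-(1-t)(1-t^{d-1})=1+2t+t^2+\cdots+t^{d-1}-t^d$, which is exactly $h(\Delta)=(1,2,\underbrace{1,\dots,1}_{d-2},-1)$.

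The step I expect to be the main obstacle is the bookkeeping for $\Delta_1$: one must recognize that deleting the two facets $V\setminus\{d+1\}$ and $V\setminus\{d+2\}$ also deletes their unique common ridge $\{2,\dots,d\}$, since that ridge is contained in no other facet of $\Delta_1$. Missing this single lost face would corrupt precisely the coefficients $h_{d-1}$ and $h_d$, and hence the sign of the last entry that the whole example depends on. Once the face numbers of $\Delta_1$, $\Delta_2$, and $\Delta_1\cap\Delta_2$ are correctly pinned down, the remaining generating-function manipulation is routine.
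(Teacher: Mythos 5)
Your proof is correct: the inclusion--exclusion on face numbers is valid because $\Delta$ is the union of $\Delta_1$ and $\Delta_2$ as sets of faces, your identifications of the three pieces (the boundary of the simplex on $V$ minus the two facets $V\setminus\{d+1\}$, $V\setminus\{d+2\}$ and their common ridge $\{2,\ldots,d\}$; a full $(d-1)$-simplex; the boundary of the simplex on $\{2,\ldots,d\}$) are all accurate, and the generating-function algebra does yield $1+2t+t^2+\cdots+t^{d-1}-t^d$. Structurally, your decomposition is exactly the one the paper uses, but executed in a different language. The paper works with the Stanley--Reisner ideal: it writes $I_{\Delta}=L\cap K$ with $L=\bigcap_{2\leq j\leq d}(x_1,x_j)=(x_1,x_2\cdots x_d)$ (the ideal of your $\Delta_1$) and $K=(x_{d+1},x_{d+2})$ (the ideal of your $\Delta_2$), and applies Hilbert series to the exact sequence $0\to S/I_{\Delta}\to S/L\oplus S/K\to S/(L+K)\to 0$, which is precisely the algebraic form of your inclusion--exclusion; indeed, after multiplying by $(1-t)^d$, the paper's three series ${\rm Hilb}_{S/L}$, ${\rm Hilb}_{S/K}$, ${\rm Hilb}_{S/(L+K)}$ become exactly your $H_{\Delta_1}=1+t+\cdots+t^{d-2}$, $H_{\Delta_2}=1$, and $H_{\Delta_1\cap\Delta_2}=(1-t)(1-t^{d-1})$. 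What the algebraic route buys is the avoidance of the very pitfall you flag: there is no face bookkeeping at all, since the lost ridge and the two lost facets are automatically absorbed into the numerator $1-t^{d-1}$ of ${\rm Hilb}_{S/L}$, which comes for free from $x_2\cdots x_d$ being a regular element of degree $d-1$. What your route buys is elementarity: it needs nothing beyond the $f$-to-$h$ transformation and binomial identities---no regular sequences or exact sequences of modules---and it makes the purity of $\Delta$ evident at the outset, which the paper needs anyway for its subsequent argument.
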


\begin{proof}
Set $n=d+2$. By \cite[Lemma 1.5.4]{hh}, we have$$I_{\Delta}=\Big(\bigcap_{2\leq j\leq d}(x_1, x_j)\Big)\cap (x_{d+1}, x_{d+2}).$$Set $L=\bigcap_{2\leq j\leq d}(x_1, x_j)=(x_1, x_2x_3\ldots x_d)$ and $K=(x_{d+1}, x_{d+2})$. Then $I_{\Delta}=L\cap K$. Consider the following exact sequence of graded $S$-modules:
\[
\begin{array}{rl}
0\longrightarrow S/I_{\Delta}\longrightarrow S/L\oplus S/K\longrightarrow S/(L+K)
\longrightarrow 0.
\end{array}
\]
It follows that$${\rm Hilb}_{S/I_{\Delta}}(t)={\rm Hilb}_{S/L}(t)+{\rm Hilb}_{S/K}(t)-{\rm Hilb}_{S/L+K}(t).$$Notice that$${\rm Hilb}_{S/L}(t)={\rm Hilb}_{\mathbb{K}[x_2, \ldots, x_n]/(x_2x_3 \ldots x_d)}(t)=\frac{1-t^{d-1}}{(1-t)^{d+1}},$$where that last equality follows from the fact $x_2x_3 \ldots x_d$ is a regular element of $\mathbb{K}[x_2, \ldots, x_n]$ with degree $d-1$. Similarly,$${\rm Hilb}_{S/K}(t)=\frac{1}{(1-t)^d}$$and$${\rm Hilb}_{S/L+k}(t)=\frac{1-t^{d-1}}{(1-t)^{d-1}}.$$A simple computation, using the above equalities shows that $${\rm Hilb}_{S/I_{\Delta}}(t)=\frac{1+2t+t^2+t^3+ \ldots +t^{d-1}-t^d}{(1-t)^d}.$$Hence, $h(\Delta)=(1,2, \underbrace{1, \ldots, 1,}_{(d-2)\text{ times}} -1)$.
\end{proof}

It can be easily seen that $(1, 2, 1)$ is an $M$-vector. Indeed, it is the $h$-vector of the simplicial complex over $[4]$ with facets $\{1, 2\}, \{2, 3\}, \{3, 4\}$ and $\{4, 1\}$, which is Cohen--Macaulay. On the other hand, we are assuming that $d\geq 5$ and thus $$h_2+ h_3+ \ldots +h_d=d-3\geq 0,$$ $$h_2+2h_3+ \ldots + (d-1)h_d=1+2+ \ldots + (d-2)-(d-1)=\frac{(d-4)(d-1)}{2}\geq 0$$ and $$h_2+ 3h_3+ \ldots + {d \choose 2}h_d=1+3+ \ldots + {d-1 \choose 2}-{d \choose 2}={d\choose 3}-{d\choose 2}\geq 0,$$where the last equality follows from \cite[Page 368, Theorem 4]{r}. Thus, $\mathbf{h}$ satisfies the assumptions of Theorem \ref{main}. We show in Proposition \ref{s2} that $\mathbf{h}$ is not the $h$-vector of any ($S_2$) simplicial complex.

We first remind some definitions and basic facts.

Let $\Delta$ be a $(d-1)$-dimensional simplicial complex with vertex set $[n]$. Assume that $f(\Delta)=(f_0, \ldots, f_{d-1})$ and $h(\Delta)=(h_0, \ldots, h_d)$ are the $f$-vector and $h$-vector of $\Delta$, respectively. It is well-known (see e.g. \cite[Corollary 5.1.9]{bh}) that

\[
\begin{array}{rl}
h_1=n-d \ \ \ \ \ {\rm and} \ \ \ \ \ h_0+h_1+ \ldots +h_d=f_{d-1}
\end{array} \tag{$\ast$} \label{ast}
\]

A simplicial complex $\Delta$ is called a {\it cone} if it has a vertex which belongs to every facet of $\Delta$.

The proof of the following lemma is simple and is omitted.

\begin{lem} \label{cone}
Let $\Delta$ be a $(d-1)$-dimensional cone, with $h(\Delta)=(h_0, \ldots, h_d)$. Then $h_d=0$.
\end{lem}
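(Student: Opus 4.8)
The plan is to exploit the fact that a cone is, algebraically, a polynomial extension of a lower-dimensional Stanley--Reisner ring, and then to read off the degree of the $h$-polynomial from the resulting Hilbert series. Let $v$ be a vertex of $\Delta$ that lies in every facet, and let $\Gamma$ be the restriction of $\Delta$ to the vertex set $[n]\setminus\{v\}$; since each facet of the $(d-1)$-dimensional complex $\Delta$ contains $v$, removing $v$ shows that $\Gamma$ is $(d-2)$-dimensional and $\Delta$ is the cone $v*\Gamma$.

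First I would observe that no minimal non-face of $\Delta$ contains $v$: if $F$ were such a minimal non-face with $v\in F$, then $F\setminus\{v\}$ would be a face, hence contained in some facet, and since that facet contains $v$ we would get $F$ inside a facet, contradicting that $F$ is a non-face. Consequently the variable $x_v$ divides none of the minimal monomial generators of $I_\Delta$, so that $I_\Delta=I_\Gamma S$ is extended from the polynomial ring $S'$ in the variables other than $x_v$, and $S/I_\Delta\cong (S'/I_\Gamma)[x_v]$. In particular $x_v$ is a nonzerodivisor on $\mathbb{K}[\Delta]$.

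Next I would translate this isomorphism into Hilbert series. Since adjoining the polynomial variable $x_v$ multiplies the Hilbert series by $1/(1-t)$, we obtain
\[
{\rm Hilb}_{S/I_\Delta}(t)=\frac{1}{1-t}\,{\rm Hilb}_{S'/I_\Gamma}(t)=\frac{h_\Gamma(t)}{(1-t)^d},
\]
where $h_\Gamma(t)=\sum_{i=0}^{d-1}h_i^{\Gamma}t^i$ is the $h$-polynomial of the $(d-2)$-dimensional complex $\Gamma$, a polynomial of degree at most $d-1$. Comparing this with ${\rm Hilb}_{S/I_\Delta}(t)=(h_0+h_1t+\cdots+h_dt^d)/(1-t)^d$ forces the two numerators to coincide, so the coefficient $h_d$ of $t^d$ must vanish.

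The only point requiring care is the bookkeeping that identifies $I_\Delta$ with the extension $I_\Gamma S$ and confirms that the numerator of the base's Hilbert series has degree at most $d-1$; everything else is formal, which matches the remark that the proof is simple. I expect no genuine obstacle here. As an alternative, one could argue topologically: a cone is contractible, so its reduced Euler characteristic vanishes, and since $h_d=(-1)^{d-1}\widetilde{\chi}(\Delta)$ this gives $h_d=0$ at once.
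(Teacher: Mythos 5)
Your proof is correct. Note that the paper itself gives no argument here---it states only that ``the proof of the following lemma is simple and is omitted''---so there is no official proof to compare against; your write-up is a legitimate filling-in of the gap. Both of your routes work: the main one (a cone point $v$ lies in no minimal non-face, hence $I_\Delta=I_\Gamma S$ and $\mathbb{K}[\Delta]\cong\mathbb{K}[\Gamma][x_v]$, so the $h$-polynomial of $\Delta$ equals that of the $(d-2)$-dimensional base $\Gamma$ and has degree at most $d-1$) is the natural algebraic argument and actually proves slightly more, namely that coning preserves the $h$-vector. The alternative via $h_d=(-1)^{d-1}\widetilde{\chi}(\Delta)$ is even quicker, and one can avoid topology entirely: the involution $F\mapsto F\cup\{v\}$ on faces not containing $v$ (including the empty face) pairs faces of consecutive dimensions, so $\widetilde{\chi}(\Delta)=0$ by a purely combinatorial cancellation. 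Either argument is fully rigorous and consistent with the way the lemma is used in the proof of Proposition \ref{s2}.
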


Let $G$ be a graph with vertex set $V(G)=\big\{v_1, \ldots, v_n\big\}$ and edge set $E(G)$. The {\it complementary graph} $\overline{G}$ is a graph with $V(\overline{G})=V(G)$ and $E(\overline{G})$ consists of those $2$-element subsets $\{v_i,v_j\}$ of $V(G)$ for which
$\{v_i,v_j\}\notin E(G)$. A subset $C$ of $V(G)$ is called a {\it vertex cover} of the graph $G$ if every edge of $G$ is incident to at least one vertex of $C$. The {\it cover ideal} of $G$, denoted by $J(G)$, is the squarefree monomial ideal which is generated by the set$$\big\{\prod_{v_i\in C}x_i : C \ {\rm is \ a \ vertex \ cover \ of} \ G\big\}.$$By \cite{v}, we know tha for every graph $G$,$$J(G)=\bigcap_{\{v_i, v_j\}\in E(G)}(x_i, x_j).$$A monomial ideal is said to be {\it unmixed} if all its associated primes have the same height. The above equality shows that a squarefree monomial ideal is unmixed of height $2$ if and only it is the cover ideal of a graph.

We are now ready to complete the proof of Theorem \ref{main}.

\begin{prop} \label{s2}
Let $d\geq 5$ be an integer. Then there is no ($S_2$) simplicial complex $\Delta$ with $h(\Delta)=(1,2, \underbrace{1, \ldots, 1,}_{(d-2)\text{ times}} -1)$. In particular, the answer of Question \ref{quest} is in general negative.
\end{prop}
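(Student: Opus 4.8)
The plan is to argue by contradiction: suppose $\Delta$ is a $(d-1)$-dimensional ($S_2$) simplicial complex with $h(\Delta)=(1,2,\underbrace{1,\ldots,1}_{(d-2)},-1)$, and derive a contradiction from the structure forced by the ($S_2$) condition together with the numerology of this particular $h$-vector. The key numerical inputs come from \eqref{ast}: since $h_1=2$, the complex has exactly $n=d+2$ vertices, and since $\sum_i h_i = 1+2+(d-2)-1 = d$, we get $f_{d-1}=d$, so $\Delta$ has precisely $d$ facets. The engine of the proof should be the Murai--Terai dichotomy quoted in the introduction (namely that for an ($S_r$) complex, if $h_i=0$ for some $i\le r$ then $h_k=0$ for all $k\ge i$) together with a Reisner-type/Alexander-duality interpretation of the ($S_2$) condition in terms of the cover ideal machinery set up just before the statement.

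The central idea I would pursue is to use the equivalence recorded at the end of the preliminaries: a squarefree monomial ideal is the cover ideal of a graph $G$ exactly when it is unmixed of height $2$. The ($S_2$) hypothesis on $\mathbb{K}[\Delta]$ should be translated (via Alexander duality, as in the Murai--Terai framework) into a condition on the Alexander dual complex $\Delta^\vee$, whose Stanley--Reisner ideal $I_{\Delta^\vee}$ is the cover ideal $J(G)$ of some graph $G$; here the ($S_2$) condition corresponds to $\mathbb{K}[\Delta]$ being unmixed, which forces $\Delta$ to be pure and $I_\Delta$ to be height-$2$ unmixed, i.e. $I_\Delta=J(G)$ for a graph $G$ on $d+2$ vertices. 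The plan is then to count: the minimal vertex covers of $G$ are in bijection with the facets of $\Delta$, so $G$ has exactly $d$ minimal vertex covers, equivalently $\overline{G}$ (or the independence complex) has exactly $d$ maximal independent sets, and I would combine this with the edge count forced by $h_1=2$ and with the refined ($S_2$)-inequalities to pin down $G$ up to isomorphism.

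Concretely, I would first show that ($S_2$) plus the shape of $\mathbf{h}$ forces $\Delta$ to be the cover-ideal complex of a graph $G$ with a very rigid structure, then invoke known results on when the cover ideal of a graph satisfies ($S_2$) (for cover ideals, ($S_2$) is equivalent to a combinatorial condition on $G$ such as $G$ having no induced structure obstructing the symbolic/ordinary powers to agree in low degree). The contradiction should emerge because $h_d=-1<0$ is incompatible with $G$ being ($S_2$): for instance, Lemma \ref{cone} shows a cone has $h_d=0$, so $\Delta$ cannot be a cone, which combined with the ($S_2$) condition and the exact facet count $d$ should leave no admissible graph $G$. Alternatively, and perhaps more cleanly, I would use the Murai--Terai result that for ($S_2$) the partial sum $h_0+h_1+\cdots+h_d=f_{d-1}$ together with positivity of the initial segment constrains $h_d$ to be nonnegative once the complex is unmixed, directly contradicting $h_d=-1$.

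The step I expect to be the main obstacle is the translation itself: making precise how the ($S_2$) condition on $\mathbb{K}[\Delta]$ forces $I_\Delta$ to be unmixed of height $2$ (hence a cover ideal), and then extracting from the literature the exact combinatorial characterization of ($S_2$) cover ideals that rules out all graphs with the forced facet/edge counts. A cleaner route, which I would prefer if it works, avoids graphs entirely: use only the quoted Murai--Terai theorem that $(h_0,h_1,h_2)=(1,2,1)$ must be an $M$-vector (it is) and that $h_2+\cdots+h_d\ge 0$ (it is, equalling $d-3$), so these do not obstruct; the obstruction must therefore come from a genuinely \emph{finer} ($S_2$) constraint, most plausibly the nonnegativity and monotonicity packaged in the last Murai--Terai statement forcing $h_d\ge 0$ whenever the preceding $h_i$ are positive. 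Since here every $h_i$ with $i\le d-1$ is strictly positive while $h_d=-1$, this directly yields the contradiction, and I would aim to prove Proposition \ref{s2} along this shorter line, falling back on the cover-ideal analysis only if the sharpened Murai--Terai inequality is unavailable in the exact form needed.
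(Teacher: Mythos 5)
Your preferred ``shorter line'' rests on a result that does not exist and is in fact false. The Murai--Terai theorem you invoke says only that if $h_i=0$ for some $i\le r$ then $h_k=0$ for all $k\ge i$; it is vacuous here, since $h_0,h_1,h_2$ are all nonzero, and it imposes no constraint at all when the initial entries are positive. There is no ($S_2$) result forcing $h_d\ge 0$ once the preceding $h_i$ are positive: every triangulation of a closed connected manifold of positive dimension is ($S_2$) (it is pure and all links in codimension at least two are connected), and the $7$-vertex triangulation of the torus has $h$-vector $(1,4,10,-1)$, with $h_0,h_1,h_2>0$ and $h_3=-1$. So the ``sharpened Murai--Terai inequality'' you hope to use is refuted by an explicit example; indeed, if such an inequality were available, the proposition would be immediate and the paper unnecessary. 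The entire difficulty is that the vector $(1,2,1,\ldots,1,-1)$ satisfies every previously known necessary condition (the paper stresses exactly this in the introduction), so the obstruction must come from somewhere new, not from the quoted theorems.

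Your fallback route (the cover-ideal analysis) is the direction the paper actually takes, but as sketched it contains an error and stops short of the decisive idea. The error: the facets of $\Delta$ are not in bijection with the minimal vertex covers of $G$. Since $I_\Delta=J(G)=\bigcap_{\{v_i,v_j\}\in E(G)}(x_i,x_j)$, the minimal primes of $I_\Delta$ --- equivalently the complements of the facets of $\Delta$ --- correspond to the \emph{edges} of $G$; minimal vertex covers correspond instead to the minimal generators of $I_\Delta$. With the correct bijection, the numerology from \ref{ast} gives that $G$ has $d+2$ vertices and exactly $d$ edges, and Lemma \ref{cone} together with $h_d=-1\neq 0$ gives that $\Delta$ is not a cone, i.e., $G$ has no isolated vertex. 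The missing punchline is then purely combinatorial: a connected graph on $d+2$ vertices needs at least $d+1$ edges, so $G$ is disconnected; since $G$ has no isolated vertex, two distinct components each contain an edge, say $\{u,v\}$ and $\{z,t\}$, and then $u,z,v,t$ is an \emph{induced} $4$-cycle in $\overline{G}$; this contradicts the fact that ($S_2$) for $S/J(G)$ forbids induced $4$-cycles in $\overline{G}$, by \cite[Corollary 3.7]{y} combined with \cite[Theorem 2.1]{eghp}. It is this specific characterization --- not a vague appeal to symbolic powers or unnamed ``known results on ($S_2$) cover ideals'' --- that closes the argument; without the disconnectedness count and the induced-$4$-cycle obstruction, your proposal never reaches a contradiction.
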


\begin{proof}
Assume by contradiction that there exists a ($S_2$) simplicial complex $\Delta$ with $h(\Delta)=(1,2, \underbrace{1, \ldots, 1,}_{(d-2)\text{ times}} -1)$. Thus, ${\rm dim}(\Delta)=d-1$ and it follows from Lemma \ref{cone} that $\Delta$ is not a cone. By \cite[Lemma 2.6]{mt}, we know that $\Delta$ is a pure simplicial complex. Therefore, the equalities \ref{ast} imply that $\Delta$ has $d+2$ vertices and $d$ facets.  It then follows from \cite[Lemma 1.5.4]{hh} that $I_{\Delta}$ is an unmixed ideal of height $2$. Hence, $I_{\Delta}$ is the cover ideal of a graph, say $G$. Since $\Delta$ is not a cone, $G$ has no isolated vertex. The number of edges of $G$ is equal to the number of facets of $\Delta$, which is $d$. On the other hand, $G$ has $d+2$ vertices. Thus, $G$ is not a connected graph.  Let $H_1$ and $H_2$ be two connected components of $G$. Assume that $\{u, v\}$ and $\{z, t\}$ are edges of $H_1$ and $H_2$, respectively. Then $u, z, v, t$ is an induced $4$-cycle in $\overline{G}$. On the other hand, $S/J(G)=S/I_{\Delta}$ satisfies the Serre's condition ($S_2$) and it follows from \cite[Corollary 3.7]{y} and \cite[Theorem 2.1]{eghp} (see also \cite[Theorem 5.8]{psty}) that $\overline{G}$ can not have any induced $4$-cycle, which is a contradiction. Therefore, $((1,2, \underbrace{1, \ldots, 1,}_{(d-2)\text{ times}} -1)$ is not the $h$-vector of any ($S_2$) simplicial complex.
\end{proof}

%%%%%%%%%%%%%%%%%%%%%%%%%%%%%%%%%%%%%%%%%%%%%%%%%%%%%%%%%%%%%%%%%%%%%%%%%%

\section*{Acknowledgment}

The author thanks Naoki Terai and Siamak Yassemi for reading an earlier version of the paper and their helpful comments. He also thanks the referee for careful reading of the paper and for valuable comments. This work was supported by a grant from Iran National Science Foundation: INSF (No. 95820482).

%%%%%%%%%%%%%%%%%%%%%%%%%%%%%%%%%%%%%%%%%%%%%%%%%%%%%%%%%%%%%%%%%%%%%%%%%%

%%%%%%%%%%%%%%%%%%%%%%%%%%%%%%%%%%%%%%%%%%%%%%%%%%%%%%%%%%%%%%%%%%%%%%%%%%

\end{document}